\newcommand{\E}[0]{{\sf E}}

\documentclass[letterpaper,11pt]{article}

\usepackage{amsthm}
\usepackage{amsmath}

\newtheorem{thm}{Theorem}[section]

\newtheorem{Lemma}[thm]{Lemma}

\newtheorem{lemma}[thm]{Lemma}
\newtheorem{prop}[thm]{Proposition}

\newtheorem{conj}[thm]{Conjecture}

\newtheorem{question}[thm]{Question}

\newcommand{\beq}[1]{\begin{equation}\label{#1}}
\newcommand{\enq}[0]{\end{equation}}

\newcommand{\bn}[0]{\bigskip\noindent}
\newcommand{\mn}[0]{\medskip\noindent}
\newcommand{\nin}[0]{\noindent}

\newcommand{\sub}[0]{\subseteq}
\newcommand{\sm}[0]{\setminus}

\newcommand{\ra}[0]{\rightarrow}
\newcommand{\Ra}[0]{\Rightarrow}

\newcommand{\ZZ}[0]{{\bf Z}}

\newcommand{\Nn}[0]{{\bf N}}


\newcommand{\bb}[0]{b}

\newcommand{\ttt}[0]{t}

\newcommand{\0}[0]{\emptyset}

\renewcommand{\qed}[0]{\begin{flushright} \rule{2mm}{3mm} \end{flushright}}

\newcommand{\C}[2]{{{#1}\choose{{#2}}}}
\newcommand{\Cc}[0]{\tbinom}
\newcommand{\ga}[0]{\alpha }

\newcommand{\gc}[0]{\gamma }

\newcommand{\gG}[0]{\Gamma }

\newcommand{\gl}[0]{\lambda }

\newcommand{\gS}[0]{\Sigma}
\newcommand{\gz}[0]{\zeta}
\newcommand{\eps}[0]{\varepsilon }
\newcommand{\vt}[0]{\vartheta}

\newcommand{\vp}[0]{\varphi}

\newcommand{\sugg}[1]{}

\newcommand{\comments}[1]{}
\usepackage[usenames]{color}
\begin{document}\renewcommand{\thefootnote}{\fnsymbol{footnote}}
\footnotetext{AMS 2010 subject classification:  05D40, 05C35, 05C80}
\footnotetext{Key words and phrases:  Mantel's Theorem,
random graph, threshold, max cut
}

\title{Mantel's Theorem for random graphs}
\author{B. DeMarco\footnotemark $~$ and J. Kahn\footnotemark
}
\date{}
\footnotetext{ * supported by the U.S.
Department of Homeland Security under Grant Award Number 2007-ST-104-000006.}
\footnotetext{ $\dag$ Supported by NSF grant DMS0701175.}

\date{}

\maketitle

\begin{abstract}
For a graph $G$, denote by $t(G)$ (resp. $b(G)$)
the maximum size of a triangle-free (resp. bipartite)
subgraph of $G$.
Of course $t(G) \geq b(G)$ for any $G$,
and a classic result of Mantel from 1907 (the first case of Tur\'an's Theorem)
says that equality holds for complete graphs.
A natural question, first considered by
Babai, Simonovits and Spencer about 20 years ago is,
when (i.e. for what $p=p(n)$) is the
``Erd\H{o}s-R\'enyi" random graph $G=G(n,p)$ likely to satisfy $t(G) = b(G)$?
We show that this is true if $p>C n^{-1/2} \log^{1/2}n $
for a suitable constant $C$,
which is best possible up to the value of $C$.
\end{abstract}

\section{Introduction}

\medskip
Write $\ttt(G)$ (resp. $b(G)$)
for the maximum size of a triangle-free
(resp. bipartite) subgraph of a graph $G$.
Of course $\ttt(G)\geq b(G)$,
and
Mantel's Theorem \cite{Mantel}
(the first case of Tur\'an's Theorem \cite{Turan})
says that equality holds if $G=K_n$.
Here we are interested in understanding
when
equality is likely
to hold for the usual (``Erd\H{o}s-R\'enyi") random graph
$G=G_{n,p}$; that is, for what $p=p(n)$
one has
\beq{tttbb}
\ttt(G_{n,p})=\bb(G_{n,p}) ~~~\mbox{{\em w.h.p.} }
\enq
(where an event holds {\em with high probability}
(w.h.p.) if its probability tends to 1 as $n\ra\infty$).
Note that \eqref{tttbb}
holds for very small $p$, for the silly reason
that $G$ is itself likely to be bipartite; but we are really
thinking of more interesting values of $p$.

The problem seems to have first been considered by
Babai, Simonovits and Spencer \cite{BSS},
who showed ({\em inter alia}) that \eqref{tttbb}
holds for $p>1/2$
(actually for $p>1/2-\eps$ for some
fixed $\eps >0$),
and asked whether it could be shown to hold for $p > n^{-c}$
for some fixed positive $c$.
This was accomplished by Brightwell, Panagiotou and Steger \cite{BPS}
(with $c = 1/250$),
who also suggested that $p> n^{-1/2+\eps}$ might be enough.
Here we prove the correct result and a little more:

\begin{thm}\label{MT}
There is a C such that if
$p>Cn^{-1/2}\log^{1/2} n$, then w.h.p. every maximum triangle-free
subgraph of $G_{n,p}$ is bipartite.
\end{thm}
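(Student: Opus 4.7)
The plan is to argue by contradiction: suppose $H$ is a maximum triangle-free subgraph of $G:=G_{n,p}$ that is \emph{not} bipartite, and derive from the random-graph properties of $G$ a bipartite competitor strictly larger than $H$. Fix a bipartition $(A,B)$ of $V(G)$ maximizing $e_H(A,B)$, and set
\[
I := E(H)\setminus E_H(A,B), \qquad M := E_G(A,B)\setminus E_H(A,B),
\]
so $I\neq\emptyset$ by assumption. The bipartite (hence triangle-free) subgraph $\widetilde H := E_G(A,B)$ satisfies $|\widetilde H|-|H|=|M|-|I|$, so maximality of $H$ forces $|M|\leq|I|$; I aim to prove the reverse strict inequality $|M|>|I|$ w.h.p., yielding the desired contradiction.

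For the lower bound on $|M|$ I use maximality of $H$: every $xy\in M$ is blocked by a triangle $xyz$ of $G$ with $xz,yz\in E(H)$, and since $x\in A$, $y\in B$, exactly one of $xz,yz$ lies inside a single side and hence belongs to $I$. This yields a witness correspondence between $M$ and $I$. Conversely, for a bad edge $e=uv\in I$ with $u,v\in A$, any triangle $uvy$ ($y\in B$) with $vy\in E(H)$ (resp.\ $uy\in E(H)$) produces a distinct missing cross edge $uy$ (resp.\ $vy$); triangle-freeness of $H$ combined with $uv\in E(H)$ forces $N_H(u)\cap N_H(v)=\emptyset$, so the edge $uy$ (resp.\ $vy$) is automatically not in $H$. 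Thus the number of pairs $(xy,e)$ with $xy\in M$ and $e\in I$ a bad edge of a witness triangle of $xy$ is at least
\[
\sum_{uv\in I}\bigl(|N_H(v)\cap B\cap N_G(u)|+|N_H(u)\cap B\cap N_G(v)|\bigr).
\]

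To convert this into a lower bound on $|M|$, I would use the w.h.p.\ regularity of $G_{n,p}$ at the relevant density: every vertex has $G$-degree $(1+o(1))np$; every pair has $G$-codegree $(1+o(1))np^2=\Omega(\log n)$ (since $np^2\geq C^2\log n$); and a discrepancy estimate $|N_G(w)\cap S|=(1+o(1))p|S|$ holds uniformly for $w$ and ``large'' $S$. The max-cut property gives $d_H(u,B)\geq d_H(u)/2$, so provided $H$ has minimum degree $\Omega(np)$ at vertices incident to $I$, each summand above is $\Omega(np^2)$. On the other hand, each $xy\in M$ lies in at most $|N_G(x)\cap N_G(y)|=O(\log n)$ witness triangles, each containing exactly one bad edge, so the displayed sum is at most $O(\log n)\cdot|M|$. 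Combining,
\[
|M|\geq c\cdot\frac{np^2}{\log n}\cdot|I|\geq cC^2|I|,
\]
which exceeds $|I|$ for $C$ large, giving the contradiction.

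The main obstacle is the two uniformity issues hiding in the preceding paragraph. First, the discrepancy estimate must be applied with $S=N_H(v)\cap B$, which is itself determined by the realization of $G$; even restricting $S$ to the fixed neighborhood $N_G(v)$ (of size $\approx np$) leaves too many possibilities for a naive union bound, forcing a more refined ``local'' concentration argument (for instance via Janson's inequality or the Kim--Vu polynomial method). Second, a genuine lower bound $d_H(u)=\Omega(np)$ at vertices $u$ incident to $I$ must be extracted from the saturation condition on $H$: each $w\in N_G(u)\setminus N_H(u)$ shares an $H$-neighbor with $u$, and the codegree bound limits how much the second $H$-neighborhood of $u$ can cover unless $d_H(u)$ itself is large. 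Executing these two estimates rigorously and uniformly over all maximum triangle-free subgraphs $H$ is the technical heart of the proof.
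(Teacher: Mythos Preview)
Your double-counting inequality cannot give what you claim. Write the identity as
\[
\sum_{e'\in M} W(e') \;=\; \sum_{e\in I} B(e),
\]
where $W(xy)=|N_H(x)\cap N_H(y)|$ is the number of witness triangles on $xy$ and $B(uv)$ is the quantity in your displayed sum. You upper-bound $W(e')$ by the $G$-codegree, which is $(1+o(1))np^2$, and you lower-bound each $B(uv)$ by $\Omega(np^2)$. The ratio of these two bounds is an absolute constant \emph{at most} $1$; it does not grow with $C$. Your asserted factor $np^2/\log n = C^2$ comes from writing the codegree as ``$O(\log n)$'' with an implied constant independent of $C$, but at $p=Cn^{-1/2}\log^{1/2}n$ the codegree is $(1+o(1))C^2\log n$, so the $C^2$ is already inside your $O(\cdot)$ and cannot be extracted. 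In fact $B(uv)\le |N_G(u)\cap N_G(v)\cap B|\le (1+o(1))np^2$, so the very best your inequality can yield is $|M|\ge (1-o(1))|I|$, which is not the strict inequality you need. (For $p$ well above the threshold the claim ``codegree $=O(\log n)$'' is simply false.)

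The two ``obstacles'' you flag are also genuine and not minor. The discrepancy estimate $|N_G(u)\cap S|\approx p|S|$ with $S=N_H(v)\cap B$ is exactly the kind of statement that fails for adversarial $S$ of size $\Theta(np)$ inside a neighbourhood: there are $2^{\Theta(np)}$ choices of $S$ but only an $\exp(-\Theta(np^2))=\exp(-\Theta(\log n))$ deviation probability to play against, and $H$ is chosen adversarially after $G$ is revealed. Neither Janson nor Kim--Vu resolves this as stated. Likewise, nothing in your setup prevents a vertex $u$ incident to $I$ from having $d_H(u)=o(np)$; the saturation argument you sketch only shows that $N_G(u)\setminus N_H(u)$ is covered by second $H$-neighbourhoods, which does not by itself force $d_H(u)$ large.

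For comparison, the paper does not try to beat $H$ with the single cut $\nabla(A,B)$. It first invokes the K\L R stability theorem to guarantee that $|I|=o(n^2p)$ and the max-cut is balanced, and then splits pairs inside $A$ into ``good'' pairs (those with near-typical $B$-codegree) and ``bad'' pairs $Q(\Pi)$. One lemma shows that once the bad pairs are stripped out the remaining $F$ satisfies $2|F[A]|+|F[A,B]|<|\Pi|$ (via a different triangle count comparing $\nabla(N_I(x),N_L(x))$ with $\nabla(N_I(x),N_J(x))$), and a second lemma shows that if bad pairs are present then some \emph{other} cut strictly beats $|\Pi|+2|Q|$. The gain over $|I|$ comes from this two-case split and from allowing the competing bipartite graph to use a different bipartition, not from the raw codegree ratio you are relying on.
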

\nin
This is best possible (up to the value of $C$),
since, as observed in \cite{BPS},
for
$p = 0.1 n^{-1/2}\log^{1/2}n$,
$G_{n,p}$ will usually contain
a 5-cycle of edges not lying in triangles.
In fact it's not hard to see that the probability in
\eqref{tttbb} tends to zero for, say,
$p \in [n^{-1},0.1 n^{-1/2}\log^{1/2}n]$, whereas,
as noted above, \eqref{tttbb} again holds for very small $p$.
An appealing guess is that, for a given $n$,
$f(p):=\Pr (\ttt(G_{n,p})=\bb(G_{n,p}))$
has just
one local minimum; but we have no idea how a proof of this
would go, or even any strong conviction that it's true.

\medskip
Of course a more general question is, what happens
when we replace ``triangle" by ``$K_r$" (and ``bipartite"
by ``$(r-1)$-partite")?
With $\ttt_r(G)$ (resp. $b_r(G)$)
the maximum size of a $K_r$-free
(resp. $(r-1)$-partite) subgraph of $G$, the natural extension
of Theorem \ref{MT} to general $r$ is
\begin{conj}\label{bpsconj}
For any fixed r there is a C such that if
$$ 
p > Cn^{-\tfrac{2}{r+1}}\log^{\tfrac{2}{(r+1)(r-2)}}n,
$$ 
then w.h.p. every maximum $K_r$-free subgraph of $G_{n,p}$ is $(r-1)$-partite.
\end{conj}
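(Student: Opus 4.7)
The plan is to generalise the strategy underlying Theorem \ref{MT}---stability plus a local swap---by replacing triangle-specific computations with their $K_r$-analogues. The first step is a sparse random analogue of the Kolaitis--Pr\"omel--Rothschild theorem: w.h.p.\ every maximum $K_r$-free subgraph $H$ of $G = G_{n,p}$ admits an $(r-1)$-partition of $V(G)$ with at most $o(pn^2)$ of its edges inside parts. At the density in Conjecture \ref{bpsconj} we are comfortably above the $K_r$ threshold $n^{-2/(r-1)}$, so such stability is by now standard and can be extracted from the hypergraph container method or from sparse regularity.

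The probabilistic heart of the argument is that the stated threshold is calibrated so that the expected number of $K_r$'s through a fixed edge of $G_{n,p}$ is
\[
\binom{n-2}{r-2}\, p^{\binom{r}{2}-1} \;=\; \Theta(\log n),
\]
using $\binom{r}{2}-1=(r-2)(r+1)/2$. The analogue here of the main probabilistic lemma behind Theorem \ref{MT} should assert that w.h.p.\ every edge of $G_{n,p}$ lies in $\Omega(\log n)$ copies of $K_r$, together with matching upper bounds on joint counts of $K_r$'s sharing edges or vertices. These estimates should follow from Janson's inequality and moment calculations, with union bounds tight up to constants in the $\log n$-factor.

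For the swap step, fix an $(r-1)$-partition $(V_1,\ldots,V_{r-1})$ of $V(G)$ maximizing the number of cross-edges of $H$; if $H$ is not $(r-1)$-partite, there is an edge $e = uv$ of $H$ inside some part. Maximality of $H$ forces every non-$H$-edge of $G$ to complete a $K_r$ in $H$; in particular each non-$H$-edge $uw$ with $w$ in some other part must admit a $K_r$-witness, that is, a $K_{r-2}$ in the common $H$-neighbourhood of $u$ and $w$. Combined with the stability output, which pins the cross-edges of $H$ near $u$ and $v$ to a nearly $(r-1)$-partite skeleton, this forces a tightly constrained local configuration of $K_r$-witnesses around $uv$, whose expected count in $G_{n,p}$ one hopes to show tends to zero at the conjectured density.

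The main obstacle is this last step. For $r=3$, non-bipartiteness of $H$ localises to a short odd cycle in $H$ and the forbidden configuration reduces cleanly to an edge lying in no triangle of $G$, giving the $\log^{1/2} n$-factor in the threshold. For general $r$ the analogous ``odd'' obstruction in an $(r-1)$-coloured framework is combinatorially much richer: one must identify the minimal gadget witnessing non-$(r-1)$-partiteness compatible with maximality, show that no such gadget appears in $G_{n,p}$ at the conjectured density, and union-bound over partitions and bad-edge locations without losing more than a $\log^{O(1)} n$-factor---so as to match the conjectured exponent $2/((r+1)(r-2))$. Delivering this final union bound tightly is where most of the work, and indeed the main uncertainty about whether the $\log$-power is correct, seems to lie.
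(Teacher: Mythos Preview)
The statement you are attempting is Conjecture~\ref{bpsconj}, which the paper explicitly does \emph{not} prove; see Section~\ref{Remarks}, where the authors say that an argument ``goes well beyond present ideas and, if correct, will appear separately.'' There is therefore no proof in the paper to compare your proposal against.

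As a strategy sketch your outline is reasonable at the level of ``stability plus cleanup,'' and your calibration of the threshold (expected $\Theta(\log n)$ copies of $K_r$ through each edge) is correct. But you yourself flag the gap: the final step---identifying the minimal obstruction to $(r-1)$-partiteness and controlling it under a union bound over all near-optimal partitions---is left entirely open. Moreover, your description of the $r=3$ case as reducing ``cleanly to an edge lying in no triangle of $G$'' conflates the \emph{lower} bound argument (from \cite{BPS}) with the paper's actual upper bound: Theorem~\ref{MT} is proved via Lemmas~\ref{HPilemma} and~\ref{Pilemma}, which track a structured family $Q(\Pi)$ of ``bad pairs'' relative to a near-optimal cut $\Pi$ and show, by separate and fairly delicate arguments, that (i) a maximum triangle-free $F$ cannot beat $\Pi$ once the bad pairs are removed, and (ii) the presence of bad pairs in $F$ forces a strictly larger cut elsewhere. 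Neither lemma is simply ``every edge lies in a triangle,'' and it is precisely the analogue of this two-lemma machinery for general $r$ that your proposal does not supply. What you have is a plausible programme, not a proof; the paper regards the conjecture as open.
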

\sugg{\begin{conj}\label{bpsconj}
For any fixed r there is a C such that
\beq{tttbb'}
\Pr(\ttt_r(G_{n,p})=\bb_r(G_{n,p}))\ra 1 ~~~ (n\ra\infty)
\enq
provided
\beq{bpscond}
p > Cn^{-\tfrac{2}{r+1}}\log^{\tfrac{2}{(r+1)(r-2)}}n.
\enq
\end{conj}}
\nin
(This 
is again best possible
apart from the value of $C$, basically because for smaller $p$
there are edges not lying in $K_r$'s.)
The argument of \cite{BPS} gives
the conclusion of Conjecture \ref{bpsconj} 
provided $p> n^{-c_r}$
for a sufficiently small $c_r$.

\medskip
The next section states our two main points,
Lemmas \ref{HPilemma} and \ref{Pilemma},
and gives the easy derivation of Theorem \ref{MT}
from these.
The lemmas themselves are proved in Sections
\ref{PL1} and \ref{PL2}, following some routine
treatment of unlikely events in Section
\ref{Preliminaries}, and we close in Section \ref{Remarks}
with a few comments on related issues.

\mn
{\bf Usage.}
Throughout the paper we use $G$ for $G_{n,p}$ and $V$ for its
vertex set.
We use $|H|$ for the {\em size}, i.e. number of edges, of a graph $H$,
$N_H(x)$ for the set of neighbors
of $x$ in $H$, and $d_H(x)$ for the degree, $|N_H(x)|$, of $x$ in $H$.
The default value for $H$ is $G$;
thus
$N(x)=N_G(x)$ and, for $B\sub V$,
$d_B(x)=|N(x)\cap B|$ (and $d_B(x,y)=|N(x)\cap N(y)\cap B|$).
For disjoint $S,T\sub V$, the set of edges joining $S,T$
in $H$ is denoted
$\nabla (S,T)$ if
$H=G$ and $H[S,T]$ otherwise.

We will sometimes think of an $R\sub \C{V}{2}$
as the graph $(V(R),R)$, with
$V(R)$ the set of vertices contained in members of $R$;
so for example $N_R(x)$ is the set of $R$-neighbors of $x$,
$R[W]$ is the subgraph of $R$ induced by $W\sub V$,
and ``$R$ is bipartite" has the obvious meaning.

When speaking of a
{\em cut} $~\Pi=(A,B)$, we will think of $\Pi$ as either the
set of edges $\nabla(A,B)$
or as the
{\em ordered} partition $A\cup B$ of $V$
(so we distinguish $\Pi=(A,B)$ and $\Pi=(B,A)$).
Of course $|\Pi|$ means
$|\nabla(A,B)|$.

We use $\log$ for $\ln$,
$B(m,p)$ for a random variable with the binomial distribution
${\rm Bin}(m,p)$, and ``$a= (1\pm \vartheta)b$"
for ``$(1-\vt)b\leq a\leq (1+\vt)b$."
Following common practice, we usually
pretend that large numbers are integers,
to avoid cluttering the exposition with essentially irrelevant
floor and ceiling symbols.

\section{Outline}

We assume from now on that $p> Cn^{-1/2}\log^{1/2}n$ with
$C$ a suitably large constant
and
$n$ large enough to support the arguments below.
In slightly more detail: we fix small positive constants
$\eps$ and $\eta$ with $\eps>>\eta$,
set $\ga =.8$, and take $C$ large relative to $\eps$.
(The most stringent demand on $C$ is that it be somewhat large
compared to $\eps^{-5/2}$; see the end of Section \ref{PL1}.
For $\ga$, any value in $(2/3,1)$ would suffice.
Apart from this, we will mostly avoid numerical values:
no optimization is attempted, and
it will be clear in what follows that the constants can be chosen
to do what we ask of them.)

Say a cut $(A,B)$ is {\em balanced} if $|A|=(1\pm \eta)n/2$;
though we will sometimes speak more generally,
all cuts of actual interest below will be balanced.

We will need the following version of
a result of Kohayakawa, \L uczak and R\"odl
\cite{KLR}.
(See \cite[Theorem 8.34]{JLR}
and e.g. \cite[Proposition 1.12]{JLR}
for the standard fact that the $G_{n,M}$ statement
implies the $G_{n,p}$ version.)
\begin{thm}\label{8.34}
For each $\vartheta>0$ there is a $K$ such that
for $p =p(n)> Kn^{-1/2}$ w.h.p. each triangle-free
subgraph of $G=G_{n,p}$ of size at least $|G|/2$
can be made bipartite by deletion of at most $\vartheta n^2p$ edges.
\end{thm}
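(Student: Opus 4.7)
The plan is to reduce the statement to the known $G_{n,M}$ version of the Kohayakawa–Łuczak–Rödl stability theorem and then transfer to $G_{n,p}$ via the standard monotonicity-style bridge between the two models. The hard content — a sparse-regularity analysis showing that any dense triangle-free subgraph of a random graph is close to bipartite — is entirely contained in the cited $G_{n,M}$ result of \cite{KLR}. My job is only to package it correctly in the $G_{n,p}$ language used in the rest of the paper.

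Concretely, I would first state the $G_{n,M}$ version: for every $\vartheta'>0$ there is a $K'$ such that if $M\ge K' n^{3/2}$, then w.h.p.\ every triangle-free subgraph of $G_{n,M}$ of size at least $M/2$ can be made bipartite by removing at most $\vartheta'n^2 q$ edges, where $q=M/\binom{n}{2}$. This is the form in which the KŁR theorem is usually quoted, and it appears as Theorem 8.34 of \cite{JLR}.

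To pass from $G_{n,M}$ to $G_{n,p}$ I would apply the standard transfer principle (\cite[Proposition 1.12]{JLR}): conditional on $|G_{n,p}|=M$, the random graph $G_{n,p}$ has the distribution of $G_{n,M}$, and by Chernoff $|G_{n,p}|=(1\pm o(1))p\binom{n}{2}$ w.h.p. I would choose $\vartheta'=\vartheta/2$ in the $G_{n,M}$ statement and take $K$ a fixed multiple of $K'$, so that for every $M$ in the likely window around $p\binom{n}{2}$ one has $M\ge K' n^{3/2}$ and the slack $\vartheta' n^2 q$ is at most $\vartheta n^2 p$ for $n$ large. A union bound (over the $O(\sqrt{n^2 p\log n})$ relevant values of $M$, or equivalently a direct application of Proposition 1.12) then yields the conclusion in the $G_{n,p}$ model.

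The main — and really only — point requiring a moment's care is that the property in question is not a monotone graph property in the usual sense: its statement involves the quantitative slack $\vartheta n^2 p$, which itself depends on $p$. I would handle this precisely by building a small safety margin into $\vartheta'$ (taking $\vartheta'=\vartheta/2$ suffices) so that the slightly different values of $q$ and $p$ in the window of likely edge counts do not matter. Beyond this bookkeeping, there is no genuine obstacle, since the substantive work has been done in \cite{KLR}.
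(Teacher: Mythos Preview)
Your proposal is correct and matches exactly what the paper does: the paper does not prove this theorem but simply cites \cite[Theorem~8.34]{JLR} for the $G_{n,M}$ version and \cite[Proposition~1.12]{JLR} for the standard transfer to $G_{n,p}$. Your extra care about the non-monotonicity (handling the $p$-dependent slack by shrinking $\vartheta$) is a reasonable elaboration of a point the paper leaves implicit.
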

\nin
See Section \ref{Remarks} for a little more on Theorem \ref{8.34}.

For a cut $\Pi =(A,B)$,
set
$X(\Pi) = \{x\in A:d_B(x)<(1-2\eps)np/4\}$
and $T(\Pi)= \{x\in A: d_B(x)<(1-\eps)np/2\}$ ($\supseteq X(\Pi)$),
and let $Q(\Pi)$ consist of those pairs $\{x,y\}$ from $A$ which either
meet $X(\Pi)$ or satisfy one of

\mn
(i) $x,y\in A\sm T(\Pi)$
and $d_B(x,y)<\alpha np^2/2$;

\mn
(ii) $|\{x,y\}\cap T(\Pi)|=1$
and $d_B(x,y)<\alpha np^2/4$;

\mn
(iii) $\{x,y\}\sub T(\Pi)$
and $d_B(x,y)<\alpha np^2/8$.

\mn
In addition we
set
$Q_v(\Pi)= \{\{x,y\}\in Q(\Pi):\{x,y\}\cap X(\Pi)\neq 0\}$ and
$Q_e(\Pi)=Q(\Pi)\sm Q_v(\Pi)$.
Note that members of
$Q(\Pi)$,
while often treated as edges of an auxiliary graph,
need not be edges of $G$.

\medskip
For a cut $\Pi =(A,B)$ and $F\sub G$, let
$$\vp(F,\Pi) = 2|F[A]| +|F[A,B]|. $$

\begin{lemma}\label{HPilemma}
W.h.p.
$$\vp(F,\Pi) < |\Pi|$$
whenever $\Pi =(A,B)$
is balanced
and $F\sub G$ is
triangle-free with $F\neq \Pi$, $F\cap Q(\Pi)=\0=F[B]$,
\beq{HP2}
|F[A]|<\eta |F[A,B]|,
\enq
and
\beq{HP3}|N_F(x)\cap B|\geq |N_F(x)\cap A| ~~\forall x\in A.
\enq
\end{lemma}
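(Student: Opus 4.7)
The plan is to rewrite the desired inequality $\vp(F,\Pi)<|\Pi|$ as
$$2|F[A]| < D := |G[A,B]| - |F[A,B]|,$$
and to show that $D$ exceeds twice $e_A := |F[A]|$. If $e_A = 0$, then $F = F[A,B]\sub \Pi$ as edge sets (using $F[B]=\0$), so $F\neq\Pi$ immediately yields $\vp(F,\Pi) = |F| < |\Pi|$.

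For $e_A\geq 1$ I plan to double-count triples $(\{x,y\},z)$ with $\{x,y\}\in F[A]$ and $z\in N(x)\cap N(y)\cap B$. The hypothesis $F\cap Q(\Pi)=\0$ gives $d_B(x,y)\geq \alpha np^2/8$ for every $\{x,y\}\in F[A]$ (the worst case being both endpoints in $T(\Pi)$), and triangle-freeness of $F$ forces, at each common neighbor $z$, at least one of $xz,yz$ to be a ``missing'' cross-edge, i.e.\ an element of $G[A,B]\sm F$. Thus the number of triples is at least $e_A\cdot \alpha np^2/8$, while the reverse count charges each missing edge $xz$ by at most $|N_F(x)\cap A\cap N(z)|$ pairs; the pseudorandom estimates from Section \ref{Preliminaries} should bound this latter quantity uniformly by roughly $np^2/2$.

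The main obstacle is sharpening this charging so that the resulting lower bound on $D$ genuinely exceeds $2e_A$: the crude estimate $|N_F(x)\cap A\cap N(z)|\leq d_A(x,z)\leq (1+o(1))np^2/2$ yields only $D\geq \alpha e_A/4$, which is insufficient with $\alpha=0.8$. Improvement must come from the structural hypotheses on $F$. Writing $\delta(x) = d_B(x)-|N_F(x)\cap B|$ for the per-vertex cross-edge deficit, condition \eqref{HP3} forces $|N_F(x)\cap A| + \delta(x) \leq d_B(x)\leq (1+\eps)np/2$, producing a trade-off between the charged degree and $\delta(x)$; \eqref{HP2} controls aggregate sums via $e_A<\eta|F[A,B]|$; and for balanced $\Pi$ the set $T(\Pi)$ should be small (by Chernoff, taken from Section \ref{Preliminaries}), so the stronger per-triple bound $d_B(x,y)\geq \alpha np^2/2$ applies for all but a small minority of edges in $F[A]$. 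I expect the crux to be combining these three inputs with an averaged (rather than $L^\infty$) version of the double count to close the remaining constant-factor gap.
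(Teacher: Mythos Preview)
Your reformulation $\varphi(F,\Pi)<|\Pi|\iff 2|F[A]|<D$ is correct, and your triple count is essentially the paper's core inequality: writing $I=F[A]$, $J=F[A,B]$, $L=G[A,B]\setminus J$, $g(x)=|\nabla(N_I(x),N_L(x))|$ and $f(x)=|\nabla(N_I(x),N_J(x))|$, triangle-freeness yields exactly $\sum_x(g(x)-f(x))\ge 0$, which is your ``reverse count $\ge$ forward count.''

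The constant-factor gap you flag is real, and the three repairs you propose --- \eqref{HP2}, \eqref{HP3}, and smallness of $T(\Pi)$ --- are all used in the paper too, but they do not suffice. The missing idea is a \emph{maximality reduction}: since it is enough to treat an $F$ maximizing $\varphi(F,\Pi)$ subject to the lemma's hypotheses, one may assume in addition
\[
d_I(x)\ \ge\ d_L(x)/2\qquad\text{for every }x\in A,
\]
because otherwise $(F\setminus I(x))\cup L(x)$ strictly increases $\varphi$ while preserving all hypotheses (triangle-freeness uses $F[B]=\emptyset$). With this extra constraint the paper shows $\sum_x(g(x)-f(x))<0$ unless $I=\emptyset$, a contradiction; so in fact $I=\emptyset$ for the maximizer and the lemma follows. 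Your route --- bounding $D$ from below via the charging $\sum_x g(x)\lesssim(\max_x d_I(x))\,pD$ --- never accesses this leverage.

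Concretely, without the displayed inequality you cannot control vertices with $d_I(x)$ small but $d_L(x)$ large. The paper uses $d_L(x)\le 2d_I(x)$ twice: for $x$ with $d_I(x)\le\eps np$ it forces $d_L(x)\le 2\eps np$, making $g(x)$ negligible there; and for all $x$ it gives $d_J(x)\ge d_B(x)/3$, which is what makes $f(x)$ large enough to beat $g(x)$ at high-$d_I$ vertices. Neither consequence follows from \eqref{HP2}, \eqref{HP3}, or $|T(\Pi)|$ small. Even with the improved per-edge bound $d_B(x,y)\ge\alpha np^2/2$ and the constraint $d_I(x)+d_L(x)\le d_B(x)$ from \eqref{HP3}, the averaged charging gives at best $D\gtrsim \alpha e_A$, still short of $2e_A$; no rearrangement of these inputs closes the gap without something equivalent to the maximality step.
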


\begin{lemma}\label{Pilemma}
W.h.p.
\beq{ttt1}
\bb(G)> |\Pi| +2|Q|
\enq
whenever the balanced cut $\Pi=(A,B)$ and $\0\neq Q\sub G\cap Q(\Pi)$ satisfy
\beq{dbad}
d_Q(x)\leq d_B(x) ~~\forall x\in A.
\enq
\end{lemma}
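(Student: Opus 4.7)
The plan is to construct a bipartition $(A^*,B^*)$ of $V$ with $|\nabla(A^*,B^*)|>|\Pi|+2|Q|$, which suffices since $b(G)$ is the maximum cut of $G$. Writing $(A^*,B^*)=(A\sm S,B\cup S)$ for a subset $S\sub A$, one has
\[
|\nabla(A^*,B^*)|-|\Pi|\;=\;\Delta(S)\;:=\;\sum_{x\in S}\bigl(d_A(x)-d_B(x)\bigr)-2|G[S]|,
\]
so the problem reduces to exhibiting $S\sub A$ with $\Delta(S)>2|Q|$. Throughout I would condition on the w.h.p.\ events of Section~\ref{Preliminaries}: degree concentration ($d(x)=(1\pm o(1))np$), concentration of $|G[W]|$ for $W$ of the relevant sizes, and uniform control of $|X(\Pi)|$ across balanced $\Pi$.

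The first ingredient handles $Q_v$ by including $X(\Pi)$ in $S$. For $x\in X(\Pi)$ the definition gives $d_B(x)<(1-2\eps)np/4$, so together with $d(x)=(1\pm o(1))np$ we get $d_A(x)-d_B(x)>(1+2\eps-o(1))np/2$. On the other hand the hypothesis $d_Q(x)\le d_B(x)$ yields
\[
2|Q_v|\;\le\;2\sum_{x\in X(\Pi)}d_B(x)\;<\;|X(\Pi)|(1-2\eps)np/2,
\]
and $2|G[X(\Pi)]|$ is of lower order on the preliminary events. Hence $\Delta(X(\Pi))\ge 2|Q_v|+c\,\eps\,np\,|X(\Pi)|$ for a constant $c>0$; that is, $Q_v$ is absorbed with a surplus linear in $\eps\,np\,|X(\Pi)|$.

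The second ingredient enlarges $S$ by a set $S_1\sub A\sm X(\Pi)$ tailored to $Q_e$, decomposed along subcases (i)--(iii). In subcase (iii), both endpoints lie in $T(\Pi)\sm X(\Pi)$, where per-vertex relocation gains at least $\eps\,np$; combining this with the strong codegree bound $d_B(x,y)<\alpha np^2/8$ and $d_Q(x)\le d_B(x)<(1-\eps)np/2$ gives $\Delta$-gain in excess of $2|Q_e^{(iii)}|$ for an appropriate choice of $S_1\sub T(\Pi)\sm X(\Pi)$. Subcase (ii) is intermediate, handled analogously via the single $T(\Pi)$-endpoint. Subcase (i), with both endpoints in $A\sm T(\Pi)$, is the delicate case: $d_A(x),d_B(x)\approx np/2$ so local vertex relocation gains essentially nothing, and one must exploit the codegree condition $d_B(x,y)<\alpha np^2/2$ directly. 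I would estimate this via a fractional/random selection $S_1\sub A\sm T(\Pi)$ and compute $\E[\Delta(S_1)]$ using the codegree bound, converting the deficit $(1-\alpha)np^2/2$ per $Q_e^{(i)}$-edge into the per-edge cut gain needed; any residual shortfall is absorbed by the Step-1 surplus.

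The main obstacle is precisely subcase (i): because $d_A(x)\approx d_B(x)\approx np/2$ for $x\in A\sm T(\Pi)$, no per-vertex relocation produces an $\Omega(np)$ gain, and the required improvement must come entirely from the codegree deficit. Managing the overlap structure of $Q_e^{(i)}$-edges (so that the $2|G[S_1]|$ term does not devour the gain) and making the estimates uniform over all balanced $\Pi$ are the central difficulties; the constraint $d_Q(x)\le d_B(x)$ is indispensable here, as it bounds the participation of any single vertex and thereby prevents concentrations that would spoil the averaging. Combining the two steps yields $\Delta(S)>2|Q|$, and hence a cut of $G$ of size exceeding $|\Pi|+2|Q|$, as required.
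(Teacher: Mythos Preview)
Your Step~1 (moving $X(\Pi)$ across the cut to absorb $Q_v$) is correct and is precisely what the paper does via $\Pi^*=(A\sm X(\Pi),\,B\cup X(\Pi))$.

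The genuine gap is Step~2. Your relocation functional
\[
\Delta(S)=\sum_{x\in S}\bigl(d_A(x)-d_B(x)\bigr)-2|G[S]|
\]
depends only on degrees and on edges inside $S$; the codegrees $d_B(x,y)$ simply do not appear in it. So there is no mechanism by which ``the deficit $(1-\alpha)np^2/2$ per $Q_e^{(i)}$-edge'' can be ``converted into per-edge cut gain'' through any choice of $S$. Concretely, for $x\in A\sm T(\Pi)$ one has $d_A(x)-d_B(x)\le(\eps+o(1))np$, so every $S_1\sub A\sm T(\Pi)$ gives $\Delta(S_1)\le(\eps+o(1))np\,|S_1|$, a bound that knows nothing about $Q$. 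Even subcase~(iii) already fails the naive count: for $x\in T(\Pi)\sm X(\Pi)$ the per-vertex gain is only $\Theta(\eps np)$ while $d_Q(x)$ may be $\Theta(np)$. And the Step-1 surplus $c\,\eps\,np\,|X(\Pi)|$ cannot rescue any of this, since $X(\Pi)$ may well be empty while $Q_e\neq\0$.

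The paper's argument for $Q_e$ is not vertex relocation at all. After passing (via max-flow/min-cut) to a bounded-degree bipartite $R\sub Q_e$ on parts $X,Y$, it exposes $G$ in rounds: first all edges except those from $Y$ into $\bigcup_{x:xy\in R}N(x)$; on this partial graph $G'$ it fixes a maximum balanced cut $(S,T)$ with $X\cup Y\sub S$ and $d_T(x)=d_B(x)$ for $x\in X$; only then are the remaining edges exposed. The codegree deficiency is used to \emph{upper-bound} any qualifying $\gS=(A,B)$, namely $|\gS|\le|G'[S,T]|+\alpha' p\sum_y F(y)$, while the freshly exposed edges give $b(G)\ge|G'[S,T]|+\sum_y|\nabla(y,M(y))|$, the last sum being binomial with mean $\approx p\sum_y F(y)>\alpha' p\sum_y F(y)$. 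Thus the codegree condition enters by bounding $|\gS|$ from above relative to an auxiliary cut, not by improving a relocation; this deferred-randomness comparison is the missing idea in your proposal.
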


\bigskip
Given Lemmas \ref{HPilemma} and \ref{Pilemma} we finish easily
as follows.
Let $F_0$ be a maximum triangle-free subgraph of $G$,
and $\Pi=(A,B)$ a cut maximizing $|F_0[A,B]|$ with
(w.l.o.g.) $|F_0[A]|\geq |F_0[B]|$.
Since $\Pi$ maximizes $|F_0[A,B]|$, we have \eqref{HP3}
(with $F_0$ in place of $F$)---otherwise we could move some
$x$ from $A$ to $B$ to increase $|F_0[A,B]|$---and
Theorem \ref{8.34} implies that w.h.p.
$F_0$ also satisfies \eqref{HP2} (actually with $o(1)$ in place of $\eta$).
Moreover $\Pi$ is balanced w.h.p., since (w.h.p.)
\begin{eqnarray}
|\nabla(A,B)|&\geq &|F_0[A,B]|>(1-o(1))|F_0| \nonumber\\
&\geq &(1-o(1))|G|/2
> (1-o(1))n^2p/4\label{nabAB1}
\end{eqnarray}
and, for example,
\beq{nabAB2}
|\nabla(A,B)| <\left\{\begin{array}{ll}
(1+o(1))|A||B|p &\mbox{if $|A|,|B| > n/5$}\\
(1+o(1))\min\{|A|,|B|\}np &\mbox{otherwise.}
\end{array}\right.
\enq
Here the second inequality in \eqref{nabAB1}
is again Theorem \ref{8.34},
and the third is the standard observation that
$b(G)\geq |G|/2$ for any $G$.
The last inequality in \eqref{nabAB1} and those in
\eqref{nabAB2} are easy
consequences of
Chernoff's inequality (Theorem \ref{Chern} below,
used {\em via} Proposition \ref{vdegree} for the second inequality in \eqref{nabAB2}).

\medskip
Let
$F_1 =F_0\sm F_0[B]$ and $F=F_1\sm Q(\Pi)$.
Noting that these modifications introduce no triangles
and preserve
\eqref{HP2} and \eqref{HP3},
we have, w.h.p.,
\begin{eqnarray}
\ttt(G) &=&|F_0|\nonumber\\
& \leq & \vp(F_1,\Pi)\nonumber\\
& =& \vp(F,\Pi) + 2|F_1\cap Q(\Pi)|\nonumber\\
&\leq & |\Pi| + 2|F_1\cap Q(\Pi)|\label{final1}\\
&\leq &\bb(G).\label{final2}
\end{eqnarray}
Here \eqref{final1} is given by Lemma \ref{HPilemma}
and \eqref{final2} by Lemma \ref{Pilemma}
(the latter applied with $Q=F_1\cap Q(\Pi)$ and
\eqref{dbad} implied by \eqref{HP3} for $F_1$).

This gives
\eqref{tttbb}.
For the slightly stronger assertion in the theorem,
notice that we have strict inequality in
\eqref{final1} unless $F=\Pi$
and in \eqref{final2} unless $F_1\cap Q(\Pi)=\0$.
Thus $|F_0|=b(G)$ implies
$F_0[A]=F[A]\cup (F_1\cap Q(\Pi))=\0$,
so also $F_0[B]=\0$
(since we assume $|F_0[A]|\geq |F_0[B]|$).
\qed

\section{Preliminaries}\label{Preliminaries}
Here we just dispose of some anomalous events.
We use Chernoff's inequality in the following form, taken from
\cite[Theorem 2.1]{JLR}.
\begin{thm}\label{Chern}
For $\xi =B(n,p)$, $\mu=np$ and any $\gl\geq 0$,
\begin{eqnarray*}
\Pr(\xi \geq \mu+\gl)& < &\exp [- \tfrac{\gl^2}{2(\mu+\gl/3)}],\\
\Pr(\xi \leq \mu-\gl) &<& \exp [- \tfrac{\gl^2}{2\mu}].
\end{eqnarray*}
\end{thm}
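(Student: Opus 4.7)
The plan is to prove both inequalities by the standard exponential moment (Chernoff--Bernstein) method. Write $\xi = \sum_{i=1}^n X_i$ with $X_i$ independent Bernoulli($p$), so $\mu = \E \xi = np$. For any $t > 0$, Markov's inequality applied to $e^{t\xi}$ gives
\[
\Pr(\xi \geq \mu+\gl) \;\leq\; e^{-t(\mu+\gl)} \,\E[e^{t\xi}] \;=\; e^{-t(\mu+\gl)} (1-p+pe^t)^n.
\]
Using the elementary bound $1-p+pe^t \leq \exp(p(e^t-1))$ termwise, this simplifies to $\exp(\mu(e^t-1) - t(\mu+\gl))$.

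Next I would optimize in $t$. Differentiating shows the minimum is attained at $t = \log(1+\gl/\mu)$, yielding the clean ``Cram\'er'' form
\[
\Pr(\xi \geq \mu+\gl) \;\leq\; \exp(-\mu\, h(\gl/\mu)), \qquad h(x) := (1+x)\log(1+x) - x.
\]
The lower-tail case is entirely analogous: take $t < 0$ (or equivalently apply the upper-tail argument to $n-\xi = B(n,1-p)$) to get $\Pr(\xi \leq \mu-\gl) \leq \exp(-\mu\, h^-(\gl/\mu))$, where $h^-$ is the corresponding rate function for the lower tail; this satisfies $h^-(x) \geq x^2/2$ on $[0,1]$.

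The only remaining step, and the one that needs a small amount of care, is converting these rate-function bounds into the Bernstein-type form stated in the theorem. For the upper tail one verifies the calculus inequality
\[
h(x) \;\geq\; \frac{x^2}{2(1+x/3)} \qquad (x \geq 0),
\]
which after multiplying by $\mu$ and substituting $x=\gl/\mu$ gives exactly $\gl^2/[2(\mu+\gl/3)]$ in the exponent. For the lower tail one uses the stronger inequality $h^-(x) \geq x^2/2$, which yields the cleaner exponent $\gl^2/(2\mu)$. The main (minor) obstacle is this last convexity-style inequality; it is routine, proved for instance by checking that both sides agree at $x=0$ together with their first derivatives, and then comparing second derivatives. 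Everything else is bookkeeping. \qed
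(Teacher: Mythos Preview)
Your argument is correct: this is the standard Chernoff--Bernstein exponential-moment proof, and the calculus inequalities $h(x)\ge x^2/(2(1+x/3))$ and $h^-(x)\ge x^2/2$ are exactly what one needs. The paper, however, does not prove this statement at all; it simply quotes it as Theorem~2.1 of Janson--\L uczak--Ruci\'nski, \emph{Random Graphs}, and uses it as a black box. So there is nothing to compare against --- your proof supplies what the paper treats as background.
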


\nin
This easily implies the next two standard facts, whose proofs we omit.
\begin{prop}\label{vdegree}
W.h.p. for all $x,y\in V$,
\beq{Deg}
d(x)=(1\pm o(1))np ~~~\mbox{and}
~~~d(x,y)=(1\pm \eps)np^2,
\enq
\end{prop}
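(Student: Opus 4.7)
The plan is to apply Chernoff's inequality (Theorem \ref{Chern}) separately to each of the two statistics $d(x)$ and $d(x,y)$, and then conclude by a union bound over the $n$ vertices and $\binom{n}{2}$ pairs, respectively. The hypothesis $p > C n^{-1/2}\log^{1/2} n$ is precisely what one needs to make both union bounds go through.

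For the degree statement, I would observe that $d(x) \sim B(n-1,p)$, so its mean is $\mu = (n-1)p = (1-o(1))np$, which is at least of order $\sqrt{n \log n}$. Taking $\lambda = \omega(\sqrt{\mu \log n})$ but $\lambda = o(\mu)$ (for instance $\lambda = \mu/\log\log n$), the two Chernoff bounds give $\Pr(|d(x)-\mu| \ge \lambda) \le 2\exp(-\lambda^2/(3\mu)) = n^{-\omega(1)}$, and a union bound over the $n$ choices of $x$ leaves probability $o(1)$ that any $d(x)$ falls outside $(1\pm o(1))np$.

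For the codegree, $d(x,y)$ is distributed as $B(n-2, p^2)$ (each other vertex $z$ is a common neighbor independently with probability $p^2$), with mean $\mu' = (n-2)p^2 = (1-o(1))np^2$. Here $np^2 > C^2 \log n$, so $\mu' \ge (C^2-o(1)) \log n$. Applying Chernoff with $\lambda = \eps np^2/2$ (say) gives a bound of the form $2\exp(-c \eps^2 np^2) \le n^{-c'\eps^2 C^2}$, which for $C$ sufficiently large (relative to $\eps$) is $o(n^{-2})$, and a union bound over the $\binom{n}{2}$ pairs $\{x,y\}$ finishes the argument.

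The only substantive point is that $C$ must be chosen large enough (compared to $\eps$) so that the codegree union bound closes; this is consistent with the standing assumption in Section 2 that $C$ is taken suitably large relative to $\eps$. There is no real obstacle beyond bookkeeping of constants.
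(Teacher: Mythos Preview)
Your proposal is correct and is exactly the standard Chernoff-plus-union-bound argument the paper has in mind; indeed, the paper omits the proof entirely, remarking only that the proposition follows easily from Theorem~\ref{Chern}. There is nothing to add.
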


\begin{prop}\label{density}
There is a $K$
such that w.h.p.
for all disjoint $S,T\sub V$ of size at least
$Kp^{-1}\log n$,
\beq{NST}
|\nabla(S,T)|=(1\pm \eps) |S||T|p
\enq
and
\beq{NST'}
|G[S]| =(1\pm \eps) \Cc{|S|}{2}p.
\enq
\end{prop}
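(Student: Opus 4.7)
The plan is the standard two-step route: for fixed disjoint $S,T$, each of $|\nabla(S,T)|$ and $|G[S]|$ is a binomial random variable with predictable mean, so Chernoff (Theorem \ref{Chern}) gives exponential tail bounds; then the assumption $|S|,|T|\geq Kp^{-1}\log n$ makes these bounds strong enough to survive a union bound over all relevant choices of $S$ and $T$.

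First I would handle $|\nabla(S,T)|$. For fixed disjoint $S,T\sub V$ with $|S|=s$, $|T|=t$, the variable $\xi:=|\nabla(S,T)|$ is distributed as $B(st,p)$ with mean $\mu=stp$. Applying Theorem \ref{Chern} with $\gl=\eps\mu$ gives
$$
\Pr(|\xi-\mu|>\eps\mu)<2\exp\bigl[-c\eps^2 stp\bigr]
$$
for an absolute constant $c>0$. Since $s,t\geq Kp^{-1}\log n$, we have $tp\geq K\log n$, so $stp\geq K s\log n$, and symmetrically $stp\geq K t\log n$; hence $stp\geq \tfrac{K}{2}(s+t)\log n$. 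Bounding the number of pairs $(S,T)$ with $|S|=s,|T|=t$ by $\binom{n}{s}\binom{n}{t}\leq n^{s+t}$ and taking a union bound, the probability that \eqref{NST} fails for some pair of sizes $(s,t)$ is at most
$$
2\exp\!\bigl[(s+t)\log n - c\eps^2 stp\bigr]
\;\leq\; 2\exp\!\bigl[-(s+t)\log n\bigr]
$$
once $K$ is chosen large relative to $\eps^{-2}$. Summing this over all $s,t\geq Kp^{-1}\log n$ gives a convergent geometric series whose sum tends to $0$ (in fact is $O(n^{-2Kp^{-1}\log n})$), so w.h.p.\ \eqref{NST} holds simultaneously for all disjoint $S,T$ of the specified sizes.

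The argument for \eqref{NST'} is essentially identical: $|G[S]|\sim B(\binom{s}{2},p)$, so Chernoff with $\gl=\eps\binom{s}{2}p$ gives failure probability at most $\exp[-c\eps^2 s^2 p/2]\leq \exp[-c\eps^2 Ks\log n/2]$, which, after the union bound over at most $n^s$ sets $S$ of each size $s\geq Kp^{-1}\log n$, again sums to $o(1)$ for $K$ large enough.

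There is no real obstacle here; the only mild subtlety is ensuring that a single choice of $K$ (depending only on $\eps$) simultaneously handles \eqref{NST} and \eqref{NST'} for the full range of admissible sizes, which is settled by the observation $stp\geq \tfrac{K}{2}(s+t)\log n$ above (and its one-variable analogue for $|G[S]|$). Both bounds follow from fixing any $K\gg \eps^{-2}$, which is consistent with the discussion of constants at the start of Section 2.
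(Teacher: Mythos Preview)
Your argument is correct and is exactly the standard Chernoff-plus-union-bound derivation the paper has in mind; the paper itself omits the proof, saying only that the proposition is a standard fact that follows easily from Theorem~\ref{Chern}.
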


\nin
The next three assertions are also easy consequences of
Theorem \ref{Chern}.
\begin{prop}\label{someversion}
There is a K such that
w.h.p., for every $\kappa  > K p^{-1}\log n$
and
$S,T\neq \0$ disjoint subsets of $V$ with $|S|\leq \min\{\kappa,|T|\}$,
\beq{chernoff1}
|\nabla(S,T)| \leq 2|T|\kappa p
\enq
and
\beq{chernoff2}
|G[S]|\leq  |S|  \kappa p.
\enq
\end{prop}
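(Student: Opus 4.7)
\medskip
\noindent\textbf{Proof plan.}
The plan is a Chernoff-plus-union-bound argument, after first eliminating the continuous parameter $\kappa$. Observe that for every fixed $S$ and $T$, both right-hand sides $2|T|\kappa p$ and $|S|\kappa p$ are monotone increasing in $\kappa$, while the left-hand sides $|\nabla(S,T)|$ and $|G[S]|$ do not involve $\kappa$ at all; so it suffices to verify the two bounds for the smallest admissible value, namely $\kappa_0 = \kappa_0(S) := \max\{|S|,\, Kp^{-1}\log n\}$. This collapses the continuous quantifier to a single event per pair $(S,T)$ (resp.\ per $S$).

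For fixed disjoint $S,T\subseteq V$ with $s := |S| \leq |T| =: t$, the variable $|\nabla(S,T)|$ has distribution $B(st,p)$ with mean $\mu = stp \leq t\kappa_0 p$. Applying the upper tail of Theorem~\ref{Chern} with $\lambda := 2t\kappa_0 p - \mu \geq t\kappa_0 p$, and using $\mu + \lambda/3 \leq 2t\kappa_0 p$, gives
$$\Pr(|\nabla(S,T)| \geq 2t\kappa_0 p) \leq \exp(-t\kappa_0 p/4) \leq \exp(-Kt(\log n)/4),$$
since $\kappa_0 p \geq K\log n$. The argument for $|G[S]|\sim B(\binom{s}{2},p)$ is identical: its mean is at most $s\kappa_0 p/2$, and the same estimate yields $\Pr(|G[S]| \geq s\kappa_0 p) \leq \exp(-Ks(\log n)/8)$.

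To finish, the number of ordered pairs $(S,T)$ with $|S|=s$, $|T|=t$ is at most $\binom{n}{s}\binom{n}{t} \leq n^{s+t}\leq n^{2t}$ (using $s\leq t$), and the number of $S$ with $|S|=s$ is at most $n^s$. Taking $K$ sufficiently large makes the Chernoff exponent dominate the log of the count in each case, so the union-bounded probabilities are dominated by $\sum_{t\geq 1} n^{-\gamma t}$ and $\sum_{s\geq 1} n^{-\gamma' s}$ for some positive $\gamma, \gamma'$, both clearly $o(1)$. The only mild subtlety is the monotonicity reduction that removes the continuous quantifier in $\kappa$; apart from that, the argument is entirely routine and parallels Propositions~\ref{vdegree}--\ref{density}.
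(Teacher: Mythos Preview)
Your proof is correct and follows essentially the same Chernoff-plus-union-bound approach as the paper. The only (cosmetic) difference is in handling the parameter $\kappa$: the paper discretizes $\kappa$ to integers in $[n]$ and absorbs an extra factor of $n^2$ (for the choices of $s$ and $\kappa$) into the union bound, whereas you use monotonicity in $\kappa$ to reduce to $\kappa_0=\max\{|S|,Kp^{-1}\log n\}$ and eliminate the quantifier over $\kappa$ entirely; both lead to the same constraint $K>12$ (up to unimportant constants).
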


\nin
{\em Proof.}
We show \eqref{chernoff1}, omitting the similar proof of
\eqref{chernoff2}.
For given $s,t$ with $s\leq t$, the number of possibilities for
$S$ and $T$
of sizes $s$ and $t$ respectively ($s\leq t$) is less than
$\binom{n}{s}\binom{n}{t} <\exp [2t\log n]$.
But for a given $S,T$, since
$\E |\nabla(S,T)| = |S||T|p \leq |T| \kappa p,$ Theorem \ref{Chern}
gives (say)
$$
\Pr(|\nabla(S,T)|\geq 2|T| \kappa p) < \exp[-|T|\kappa p/3].
$$
The probability that \eqref{chernoff1} fails for some $\kappa,S,T$
is thus
at most
$$\mbox{$n^2\sum_{t>0}\exp[(2-K/3)t\log n]$}$$
(where the $n^2$ covers choices for $s,\kappa\in [n]$),
which is $o(1)$ if $K > 12$.
\qed

\begin{prop}\label{PiXprop}
There is a K such that w.h.p. $|T(\Pi)|< Kp^{-1}$
for every balanced cut $\Pi$.
\end{prop}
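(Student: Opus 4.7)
The plan is a direct union bound over balanced cuts, exploiting the fact that for each fixed part $B$ the random variables $\{d_B(x):x\in A\}$ are mutually independent. Concretely, the bad event ``$|T(\Pi)|\geq s$ for some balanced $\Pi$'', with $s:=Kp^{-1}$, is contained in the union, over balanced ordered partitions $(A,B)$ and $s$-subsets $S\subseteq A$, of the event $\{S\subseteq T(\Pi)\}$; the number of such triples $(A,B,S)$ is at most $2^n\binom{n}{s}$.

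For the Chernoff step, fix $(A,B,S)$ and $x\in S$: then $d_B(x)$ is binomial with parameters $|B|$ and $p$, with mean $\mu:=|B|p\geq(1-\eta)np/2$. Since $\eta\ll\eps$, the threshold satisfies $(1-\eps)np/2\leq(1-\eps/2)\mu$, so Theorem \ref{Chern} yields
$$\Pr(d_B(x)<(1-\eps)np/2)<\exp(-c\eps^2 np)$$
for some fixed constant $c>0$. For distinct $x\in S$ these events depend on disjoint edge sets (all incident to the fixed set $B$), so they are independent, yielding $\Pr(S\subseteq T(\Pi))<\exp(-cs\eps^2 np)$.

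Combining, the failure probability is at most
$$\exp\bigl(n\log 2+s\log(en/s)-cs\eps^2 np\bigr).$$
With $s=Kp^{-1}$ and $np\geq Cn^{1/2}\log^{1/2}n$, the middle term is $Kp^{-1}\log(enp/K)=O(p^{-1}\log n)=o(n)$, while the negative term is $cK\eps^2 n$. Taking $K$ sufficiently large compared to $\eps^{-2}$ (say $K>2\log 2/(c\eps^2)$) makes the exponent $-\Omega(n)$, so the bound is $o(1)$.

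The only point requiring any care is that the bound per balanced cut must improve on the $2^{-n}$ needed to absorb the $2^n$ union bound over cuts; this is exactly what the saving $\exp(-cs\eps^2 np)=\exp(-cK\eps^2 n)$ supplies once $K$ is chosen large compared to $\eps^{-2}$, and it is comfortably afforded by the hypothesis $p\geq Cn^{-1/2}\log^{1/2}n$. The minor mismatch between the threshold $(1-\eps)np/2$ and the true mean $|B|p$ is harmless given $\eta\ll\eps$.
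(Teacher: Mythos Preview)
Your argument is correct and follows essentially the same route as the paper: a union bound over the at most $2^n\binom{n}{s}$ choices of balanced cut and candidate $s$-set, combined with a Chernoff estimate showing that any fixed $s$-set lies in $T(\Pi)$ with probability at most $\exp[-c\eps^2 snp]=\exp[-cK\eps^2 n]$. The only cosmetic difference is that the paper applies Chernoff once to the aggregate $|\nabla(S,B)|$ (noting $S\subseteq T(\Pi)$ forces $|\nabla(S,B)|<(1-\eps)snp/2$), whereas you apply Chernoff to each $d_B(x)$ separately and multiply using independence; both yield the same exponent and the same choice $K\asymp\eps^{-2}$.
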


\nin
{\em Proof.}
The number of possibilities for $\Pi=(A,B)$ and
a $T\sub A$ of size $t:=\lceil K/p\rceil$ is less than
$\exp_2[n +t \log_2 n]$, while
for such a $\Pi$ and $T$,
$$
\Pr(T(\Pi)\supseteq T) <
\Pr(|\nabla(T,B)| < (1-\eps)tnp/2)<\exp[-ctnp],
$$
with $c\approx \eps^2/4$
(using $|B|> (1-\eta)n/2$).  The proposition follows,
e.g. with $K=4\eps^{-2}$.\qed

\begin{prop}\label{ldegreeQ}
There is a $K$ such that w.h.p. for every
cut $\Pi =(A,B)$ and $x\in A\sm X(\Pi)$,
\beq{ldegree}
d_{ Q_e(\Pi)}(x) < K/p .
\enq
\end{prop}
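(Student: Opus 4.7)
The plan is to trade the prohibitive $2^n$ union bound over balanced cuts for a much smaller union bound over the $\leq 2^{|N(x)|}\leq 2^{(1+o(1))np}$ possible values of $S:=N(x)\cap B$, which is the only feature of $\Pi$ relevant to $d_{Q_e(\Pi)}(x)$ once we know $N(x)$.

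First, partition the pairs counted by $d_{Q_e(\Pi)}(x)$ according to which of (i)--(iii) they satisfy. Any pair lying in case (iii), or in case (ii) with the other endpoint $y$ in $T(\Pi)$, contributes a $y\in T(\Pi)$, and altogether these number at most $|T(\Pi)|=O(1/p)$ by Proposition \ref{PiXprop}. The remaining ``hard'' contribution comes from (a) case (i) pairs with both $x,y\in A\sm T(\Pi)$, and (b) case (ii) pairs with $x\in T(\Pi)\sm X(\Pi)$ and $y\in A\sm T(\Pi)$. In (a), $|S|\geq (1-\eps)np/2$ and the relevant bound on $d_B(x,y)=|N(y)\cap S|$ is $\ga np^2/2$; in (b), $|S|\geq (1-2\eps)np/4$ and the bound is $\ga np^2/4$. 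With $\ga=.8$ and $\eps$ small, each threshold falls below the conditional mean $|S|p$ by a fixed multiplicative factor bounded away from $1$.

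Now fix $x$, condition on $N(x)$, and fix $S\sub N(x)$ of appropriate size. For each $y\in V\sm(S\cup\{x\})$ (which includes every $y\in A\sm\{x\}$ since $S\sub B$), the event $\{|N(y)\cap S|<\tau\}$ depends only on the independent Bernoulli$(p)$ indicators of edges from $y$ into $S$; these edge sets are pairwise disjoint across distinct $y$, so the events are mutually independent, each of probability at most $q:=\exp(-cnp^2)\leq n^{-cC^2}$ by Theorem \ref{Chern} (for some $c=c(\ga,\eps)>0$). Thus the number of bad $y$'s is stochastically dominated by $B(n,q)$, and
$$\Pr(B(n,q)\geq K/p)\leq (enqp/K)^{K/p}\leq \exp(-c'Knp)$$
for some $c'>0$ once $C$ is large enough that $cC^2\log n$ dominates $\log(np/K)$. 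Union-bounding over the $n$ choices of $x$ and the at most $2^{(1+o(1))np}$ choices of $S\sub N(x)$ gives a total failure probability at most
$$n\exp\bigl((1+o(1))(\log 2)np-c'Knp\bigr)=o(1)$$
for any constant $K$ with $c'K>\log 2$; adding the $O(1/p)$ contribution from $T(\Pi)$ is absorbed into the final value of $K$.

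The one non-routine step is the realization that the right object to enumerate is $S=N(x)\cap B$ rather than $\Pi$ itself: because $|N(x)|=O(np)$ and $np\ll n$, this saves an exponential factor in the union bound, and the combined saving is exactly what the lower bound $p\gtrsim n^{-1/2}\log^{1/2}n$ is tailored to exploit (this is what forces $np^2\gtrsim C^2\log n$, i.e.\ $q$ to be a sufficiently negative power of $n$). Everything is carried out on the w.h.p.\ events supplied by Propositions \ref{vdegree} and \ref{PiXprop}, and the case bookkeeping for (ii) and (iii) is otherwise entirely routine.
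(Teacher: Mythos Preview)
Your proof is correct and follows essentially the same approach as the paper: both recognize that the only relevant feature of $\Pi$ is $T:=N(x)\cap B$ (your $S$), enumerate the at most $2^{(1+o(1))np}$ possibilities for this set, and use Chernoff to bound the number of $y$ with $|N(y)\cap T|<\tfrac{\alpha}{1-2\eps}|T|p$. The only cosmetic differences are that the paper enumerates the putative bad set of $y$'s (size $\lceil K/p\rceil$) and applies Chernoff once to the aggregate $|\nabla(S,T)|$ rather than per vertex, and that your separate disposal of $y\in T(\Pi)$ via Proposition~\ref{PiXprop}, while correct, is unnecessary since the uniform threshold $\tfrac{\alpha}{1-2\eps}|T|p$ already covers all three cases.
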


\nin
{\em Proof.}
By Proposition \ref{vdegree} it's enough to show that w.h.p.
\eqref{ldegree} holds whenever (say)
$d(x)\leq(1+\eps)np$.
Noting that a
violation at $x$ (and some $\Pi$)
implies that there are disjoint
$S\sub V$ and $T\subseteq N(x)$ with
$|T|\geq t:=(1-2\eps)np/4$,
$|S|=s:= \lceil K/p\rceil$
and $|\nabla(S,T)|<\frac{\alpha}{1-2\eps}s|T|p=:(1-\gz)s|T|p$,
we find that
the probability of such a violation with
$d(x)\leq(1+\eps)np$
is at most
$$n^22^{(1+\eps)np}\Cc{n}{s}\exp[-\gz^2stp/2],$$
which
is $o(1)$ for sufficiently large $K$
(e.g. $K=5000$ is enough).\qed

\section{Proof of Lemma \ref{HPilemma}}\label{PL1}
We will show that the ``w.h.p." statement in
Lemma \ref{HPilemma} holds whenever we have the conclusions of
Propositions \ref{vdegree}-\ref{PiXprop};
so we assume in this section that these conclusions hold for
$K$, which we take to be
the largest of the $K$'s appearing in these propositions
(so $K\approx 4\eps^{-2}$, which is what's needed in
Propositions \ref{density} and \ref{PiXprop}).

To keep the notation simple, we set, for a given $\Pi=(A,B)$ and $F$,
$$I= F[A], ~J=F[A,B], ~L = G[A,B]\sm J,$$
and write, e.g., $I(x)$ for the set of edges of $I$
containing $x$.

%
We may 
assume that, given $\Pi$, $F$
maximizes $\vp(F,\Pi)$ subject to the conditions of the lemma.
Notice that this implies
\beq{IJ}
d_I(x)\geq d_L(x)/2 ~~\forall x\in A,
\enq
since if $x$ violates \eqref{IJ} then
$F':=(F\sm I(x))\cup L(x)$ satisfies
the conditions of the lemma (using $F[B]=\0$ to say $F'$ is triangle-free)
and has
$\vp(F',\Pi)>\vp(F,\Pi)$.  We will actually show that if \eqref{IJ}
is added to our other
assumptions then $I=\0$, whence $F\subset \Pi$ and $\vp(F,\Pi)=|F|<|\Pi|$;
so we now assume \eqref{IJ}.

\medskip
Set
$T=T(\Pi)\sm X(\Pi)$,
$S=\{x\in A\sm  T: d_I(x)>\eps np\}$,
$R=A\sm(S \cup T)$,
$T_1=\{x\in T: d_I(x)>\eps np\}$ and
$T_2= T\sm T_1$.
Let
$$
M = |\{(x,y,z):xy\in I, xz\in L, yz\in G\}|.
$$
(Note $xy\in I\Ra x,y \in A$ and then  $xz\in L\Ra z\in B$.)
Since $F$ is triangle-free, we have
\beq{M}
\sum_{x\in A}|\nabla(N_I(x),N_L(x))| ~ =~ M ~
\geq~
\sum_{x\in A}|\nabla(N_I(x),N_J(x))|.
\enq
So if we set
$g(x) =|\nabla(N_I(x),N_L(x))|$ and
$f(x) = |\nabla(N_I(x),N_J(x))|$ (for $x\in A$), then
\eqref{M} says
$$
\sum_{x\in A}(g(x)-f(x))\geq 0,
$$
whereas we'll show
\beq{contradiction}
\mbox{$\sum_{x\in A}(g(x)-f(x))< 0~$ unless $~I=\0$.}
\enq

\mn
{\em Proof.}
We first assert that
\beq{upshot1}
g(x)-f(x)<\left\{\begin{array}{ll}
(1+4\eps)d_I(x)np^2/3& \mbox{if $x\in S$,}\\
(1+4\eps)d_I(x) np^2/6& \mbox{if $x\in T_1$.}
\end{array}\right.
\enq
To see this, rewrite
\beq{gfnabla}
g(x)-f(x)=
|\nabla(N_I(x),N_B(x))|
-2|\nabla(N_I(x),N_J(x))|.
\enq
For $x\in S\cup T_1$, \eqref{NST}
(with \eqref{HP3}) gives
$|\nabla(N_I(x),N_B(x))|<(1+\eps)pd_I(x)d_B(x)$
and
$|\nabla(N_I(x),N_J(x))|>(1-\eps)pd_I(x)d_J(x)$,
while
$$
d_J(x) \geq d_B(x)/3
$$
for any $x\in A$
(since
$d_L(x)+d_J(x)=d_B(x)~$
and, according to \eqref{IJ} and \eqref{HP3},
$~d_L(x)\leq 2d_I(x) \leq 2d_J(x)$).
Inserting these bounds in \eqref{gfnabla} and using
(quite unnecessarily)
$d_B(x)\leq d(x)-d_I(x)<(1+o(1)-\eps)np$
(see \eqref{Deg}) gives \eqref{upshot1}.

\medskip
We next consider $x\in R\cup T_2$, and rewrite
\beq{gyfy}
g(x)-f(x) = 2|\nabla(N_I(x),N_L(x))| - |\nabla(N_I(x),N_B(x))|.
\enq
We consider the two terms on the right separately,
beginning with the second.
Recalling that $I\cap Q(\Pi)=\0$
and setting $d_I'(x)=|N_I(x)\sm T|$,
$d_I''(x)=|N_I(x)\cap T|$,
we have
\beq{second1}
|\nabla(N_I(x),N_B(x))|
\geq \left \{\begin{array}{ll}
\ga np^2 \left( d_I'(x)/2  + d_I''(x) /4  \right)
&\text{if $x\in R$,}\\
\ga np^2 d_I(x) /8 &
\text{if $x\in T_2$.}
\end{array}\right.
\enq

For the first term on the r.h.s. of \eqref{gyfy} we have
\beq{third}
x\in R\cup T_2  ~~ \Ra ~~
|\nabla(N_I(x),N_L(x))| ~<~ d_I(x)\cdot 4\eps np^2,
\enq
using \eqref{chernoff1}
and
the fact that $x\in R\cup T_2$ implies
$d_L(x)\leq 2d_I(x)\leq 2\eps np.$
(In more detail:  if $d_L(x)\leq d_I(x)$ then we use
\eqref{chernoff1}
with $S=N_L(x)$, $T=N_I(x)$ and $\kappa = \eps np$; otherwise,
we take $S=N_I(x)$, $T=N_L(x)$ and $\kappa = \eps np$
to obtain the bound $d_L(x)\cdot 2\eps np^2
\leq d_I(x)\cdot 4\eps np^2$.)

In particular, for $x\in T_2$ we have
\beq{T2}
g(x)-f(x)
~\leq ~d_I(x)np^2 (8\eps-\ga/8)
~\leq ~0.
\enq

\medskip
Collecting the information from
\eqref{upshot1} and
\eqref{gyfy}-\eqref{T2}, we find that the sum
in \eqref{contradiction}
is bounded above by

\beq{bd0}
np^2 [(1+4\eps)(\sum_{x\in S} \tfrac{d_I(x)}{3} +
\sum_{x\in T_1} \tfrac{d_I(x)}{6}) +
\sum_{x\in R}
\{8\eps d_I(x)- \ga (\tfrac{d_I'(x)}{2}  + \tfrac{d_I''(x)}{4})\}].
\enq
%
So we just need to show that this is negative if $I\neq \0$,
which follows from
\beq{bd1}
\sum_{x\in R} d_I' (x) \geq .9 \sum_{x\in S} d_I(x)
\enq
and
\beq{bd2}
\sum_{x\in R} d_I'' (x) \geq .9 \sum_{x\in T_1} d_I(x)
~~\mbox{if (say)
$|T_1|\geq \eta|S|$.}
\enq
(If $\eta |S|>|T_1|$ then
\eqref{bd1} is enough and we don't need the $d_I''$ terms in
\eqref{bd0}.)

\mn
The proofs of \eqref{bd1} and \eqref{bd2} are
similar and we just give the first.

\mn
{\em Proof of \eqref{bd1}.}
We may of course assume $S\neq\0$.
Since $\sum_{x\in S} d_I(x) = \sum_{x\in R}d_I'(x) +
|I[S,T]|+2|I[S]|$,
it's enough to show
\beq{bd1toshow}
\mbox{$|\nabla(S,T)|+2|G[S]|\leq .1 \sum_{x\in S} d_I(x).$}
\enq
Notice that
$|T|<Kp^{-1}$ (see Proposition \ref{PiXprop})
and $|S| < (\eta/\eps)n$ (by \eqref{HP2} and \eqref{NST},
the latter applied to $|\nabla(A,B)|$).
Combining these bounds with the conclusions of
Proposition \ref{someversion} (using $\kappa = (\eta/\eps)n$
and $\kappa = Kp^{-1}\log n$ respectively)
gives
$|G[S]|\leq |S| (\eta/\eps)np$ and
$$
|\nabla(S,T)|\leq 2\max\{|S|,|T|\}K \log n
\leq \left\{\begin{array}{ll}
2K|S|\log n&\mbox{if $|S|\geq |T|$,}\\
2K^2p^{-1}\log n&\mbox{if $|S|< |T|$.}
\end{array}\right.
$$
So, noting that $\sum_{x\in S} d_I(x)>|S|\eps np$
and that $2K^2p^{-1}\log n$ is small relative to $\eps np$,
we have \eqref{bd1toshow}.\qed

\section{Proof of Lemma \ref{Pilemma}}\label{PL2}

For $\Pi=(A,B)$, let  $\Pi^*= (A\sm X(\Pi), B\cup X(\Pi))$.
Propositions \ref{vdegree} and \ref{PiXprop} imply that w.h.p.
\begin{eqnarray}
|\Pi^*|&\geq &|\Pi| + \sum_{x\in X(\Pi)} (d(x)-2d_B(x)-|X(\Pi)|)\nonumber\\
&\geq &|\Pi| + |X(\Pi)|np/2 ~~~~~
\mbox{{\em for every balanced} $\Pi$}.
\label{PiPi*}
\end{eqnarray}
If $Q$ and $\Pi$ are as in Lemma \ref{Pilemma} and
$|Q\cap Q_v(\Pi)|>(1+\eps)^{-1}|Q|$ then, since
$|Q\cap Q_v(\Pi)|<|X(\Pi)|(1-2\eps)np/4$
(by \eqref{dbad}),
we have
$$|Q|<(1-\eps-2\eps^2)|X(\Pi)|np/4,$$
and it follows that
$$b(G)\geq |\Pi^*| > |\Pi| + 2|Q|$$
provided \eqref{PiPi*} holds.
It is thus enough to show
\begin{Lemma}\label{badedgeQ}
There is a $\delta>0$ such that w.h.p.
$$b(G)> |\Pi| + |Q|\delta np^2$$
for every balanced cut  $\Pi$ and $\0\neq Q\subseteq Q_e(\Pi)$.
\end{Lemma}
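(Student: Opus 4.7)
The plan is to construct a bipartition $\Pi'=(A',B')$ of $V$ with $|\Pi'|>|\Pi|+\delta np^2|Q|$, since then $b(G)\geq|\Pi'|$ gives the lemma. We build $\Pi'$ from $\Pi$ by vertex-disjoint local swaps, one per edge of a matching in $Q$.

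By Proposition~\ref{ldegreeQ}, $d_{Q_e(\Pi)}(x)<K/p$ for every $x\in A\setminus X(\Pi)$; in particular $\Delta(Q)<K/p$, so a greedy argument produces a matching $M\subseteq Q$ with $|M|\geq|Q|p/(2K)$. For each $xy\in M$, combining the $Q_e(\Pi)$-bound $d_B(x,y)<\alpha np^2/2$ with $d_B(x),d_B(y)\geq(1-2\eps)np/4$ (from $x,y\notin X(\Pi)$) gives
\[|N_B(x)\setminus N_B(y)|=d_B(x)-d_B(x,y)=\Omega(np),\]
and symmetrically for $N_B(y)\setminus N_B(x)$. For each $xy\in M$ select a subset $S_{xy}\subseteq N_B(x)\setminus N_B(y)$; since these asymmetric neighborhoods each have size far exceeding $|M|$, the $S_{xy}$'s can be chosen pairwise disjoint. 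Define $\Pi'$ by performing, simultaneously for all $xy\in M$, the swap that moves $x$ from $A$ to $B$ and the set $S_{xy}$ from $B$ to $A$. Since $M$ is a matching, the $x$'s are pairwise distinct, the swaps combine cleanly, and a direct expansion (analogous to the derivation of $|\Pi^*|$ preceding Lemma~\ref{badedgeQ}) gives
\[|\Pi'|-|\Pi|=\sum_{xy\in M}\Bigl[(d_A(x)-d_B(x))+\sum_{z\in S_{xy}}(d_B(z)-d_A(z))+2|S_{xy}|-2|G[S_{xy}]|\Bigr],\]
where the $2|S_{xy}|$ comes from the edges $xz$ ($z\in S_{xy}\subseteq N(x)$) being counted in $2|\nabla(\{x\},S_{xy})|$.

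The crux of the proof, and the main obstacle, is to select the sizes $|S_{xy}|$ so that the sum is at least $\delta np^2|Q|$, i.e.\ per-matching-edge contribution of order $np$. A naive balance of $2|S_{xy}|$ against $|G[S_{xy}]|\sim|S_{xy}|^2p$ peaks at $|S_{xy}|\sim 1/p$ and yields only $O(1/p)$ per matching edge, so the argument must go further, most likely by supplementing each swap with compensating moves drawn from the large common $A$-neighborhoods $|N_A(x)\cap N_A(y)|\geq(1-\eps-\alpha/2)np^2$ (forced by the $Q_e$-condition together with $d(x,y)=(1\pm\eps)np^2$ from Proposition~\ref{vdegree}). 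The remaining fluctuation terms $(d_A(x)-d_B(x))$ and $\sum_{z\in S_{xy}}(d_B(z)-d_A(z))$, concentrated at scale $\sqrt{np}$, are absorbed using Chernoff bounds (Theorem~\ref{Chern}) via Proposition~\ref{vdegree}, combined with a union bound over the balanced cuts $\Pi$ and the possible choices of $Q$.
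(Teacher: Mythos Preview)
Your approach---explicit cut modification via local swaps---is fundamentally different from the paper's, and it has genuine gaps that I do not see how to close.

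First, the heart of your argument is missing: you need a gain of order $np$ per matching edge (since $|M|\geq |Q|p/(2K)$ forces per-edge gain at least $2K\delta np$), and you yourself note that the swap as written gives only $O(1/p)$. Your suggestion to ``supplement each swap with compensating moves from $N_A(x)\cap N_A(y)$'' is not a plan: moving a vertex $w$ from $A$ to $B$ gains $d_A(w)-d_B(w)$, which for a generic balanced cut and generic $w$ is $O(\sqrt{np})$ at best, not the $\Theta(np)$ you need, and there is no evident reason common $A$-neighbors of $x,y$ should behave specially in this respect.

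Second, your claim that the terms $d_A(x)-d_B(x)$ and $d_B(z)-d_A(z)$ are ``concentrated at scale $\sqrt{np}$'' is false for an \emph{arbitrary} balanced $\Pi$: these quantities are whatever the cut makes them, possibly of order $np$ with either sign. Related to this, your union bound must cover all $2^n$ balanced cuts (and then all $Q$), so you would need failure probability $e^{-\Omega(n)}$ for each fixed $\Pi$; nothing in the swap picture delivers this.

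The paper sidesteps both problems by an edge-revelation argument. It first passes to a bipartite $R\subseteq Q_e(\Pi)$ on parts $X,Y$ with $d_R\leq\lceil\tau/p\rceil$, then fixes the data $(R,X,Y,f)$ where $f(x)$ records $d_B(x)$---there are only $\exp[O(|R|\log n)]$ such tuples---and reveals $G$ in stages so that the edges from each $y\in Y$ into $M(y):=\bigl(\cup_{xy\in R}N(x)\bigr)\cap T$ come last, where $(S,T)$ is a \emph{maximum} balanced cut of the already-revealed graph $G'$ subject to $X\cup Y\subseteq S$ and $d_T(x)=f(x)$. The $Q_e$ condition caps $d_B(x,y)<\alpha' d_B(x)p$, so any admissible $\Pi$ satisfies $|\Pi|\leq |G'[S,T]|+\alpha' p\sum_y F(y)$; meanwhile $b(G)\geq |G'[S,T]|+\sum_y|\nabla(y,M(y))|$, and the freshly revealed binomial $\sum_y|\nabla(y,M(y))|$ is about $p\sum_y F(y)\geq \gamma|R|np^2$ with failure probability $\exp[-\Omega(|R|np^2)]$. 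The comparison to the max cut of $G'$ handles all compatible $\Pi$ simultaneously, so the $2^n$ union bound over cuts never arises.
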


\begin{proof}
Set $\gc=(1-2\eps)/4$, $\ga'=\ga/(1-2\eps)$,
and let $\gz$ be a positive constant satisfying
$$\vt :=.9 -2\gz/\gc-\ga'>0.$$

By Proposition \ref{ldegreeQ} it's enough to
prove Lemma \ref{badedgeQ} when
$d_Q(x)<K/p$ ($K$ as in the proposition)
for all $x\in A$.
It's also easy to see that for any
such $Q$ and $\tau\in [ p,K]$, there is
a bipartite $R\sub Q$ with
\beq{R1}
d_R(x)\leq \lceil\tau/p\rceil ~~~\forall x
\enq
and
$
|R|\geq  \frac{\tau}{2K}|Q|.
$ 
(To see this, start with a bipartite $Q'\sub Q$ with $|Q'|\geq |Q|/2$.
Assigning each edge of $Q'$ weight $\tau/K$ gives total weight at
each vertex at most $\lceil\tau/p\rceil $ (actually $\tau/p$ of course,
but we want integers), and the Max-flow Min-cut Theorem then
gives the desired $R$.)
It thus suffices to prove Lemma \ref{badedgeQ}
with $Q$ replaced by a bipartite
$R\sub Q_e(\Pi)$
satisfying \eqref{R1}, where we set
$$\tau =\max\{\gz, p\}.$$

\mn
(We could of course just invoke \cite{BPS} to handle large $p$,
but it seems silly to avoid the few extra lines
needed to deal with this easier case.)

\medskip
For $X,Y$ disjoint subsets of $V$,
$f:X\ra \{k\in \Nn: k\geq \gc np\}$, and
$R\sub \{\{x,y\}:x\in X,y\in Y\}$
satisfying \eqref{R1}
with $V(R)=X\cup Y$,
denote by $E(R,X,Y,f)$ the event
that there is a balanced cut $\gS=(A,B)$ with
$R\sub Q_e(\gS)$,
\beq{RPi}
d_B(x) = f(x) ~~\forall x\in X,
\enq
and
$$
\bb(G)< |\gS| +\vt |R|\gc np^2.
$$

\mn
We will show
\beq{ERXYf}
\Pr(E(R,X,Y,f)) <\exp[-.001|R| np^2].
\enq
This is enough to prove Lemma \ref{badedgeQ}
(with $R$ in place of $Q$ as discussed above),
since the number of possibilities for $(R,X,Y,f)$
with $|R| =t$ is less than
$\C{\C{n}{2}}{t}2^tn^t < \exp [3t\log n]$.

\medskip
For the proof of \eqref{ERXYf} we think of choosing $G$
in stages:

\mn
(i)  Choose all edges of $G$ except those in $\nabla(Y,V\sm X)$.

\mn
(ii) Choose all remaining edges of $G$ except those belonging to the
sets $\nabla(y,\cup _{xy\in R}N_x)$ for $y\in Y$.

\mn
(iii)  Choose the remaining edges of $G$.

\mn
Let $G'$ be the subgraph of $G$ consisting of the edges chosen
in (i) and (ii),
and let $(S,T)$ be a balanced cut of $G'$ of maximum
size among those
satisfying
$$
\mbox{$X\cup Y\sub S~$ and $~d_T(x)=f(x) ~~\forall x\in X$.}
$$
(Of course if there is no such cut then $E(R,X,Y,f)$ does not occur.)
For each $y\in Y$ set $M(y)=(\cup _{xy\in R}N_x)\cap T$
and $F(y) =\sum_{xy\in R} f(x)$.

If $\gS=(A,B)$ and $R\sub Q_e(\gS)$, then
\beq{alpha'}
d_B(x,y) < \ga' \min\{d_B(x), d_B(y)\}p  ~~~\forall \{x,y\}\in R.
\enq
Our choice of $(S,T)$ gives
\begin{eqnarray}
|\gS|
&\leq &|G'[S,T]| + \sum_{y\in Y}\sum_{xy\in R}d_B(x,y)\nonumber\\
&\leq& |G'[S,T]| + \sum_{y\in Y}\sum_{xy\in R}\ga' d_B(x)p\nonumber\\
&=& |G'[S,T]| + \ga' p\sum_{y\in Y}F(y)\label{apF}
\end{eqnarray}
for any balanced cut $\gS=(A,B)$ satisfying
$X\cup Y\sub A$, \eqref{RPi} and \eqref{alpha'}, while
\beq{bGforY}
\bb(G)\geq |G'[S,T]| + \sum_{y\in Y}|\nabla(y,M(y))|.
\enq

Suppose
first that we are in the (main) case $p<\gz$ (so $\tau = \gz$).
Then w.h.p. (depending only on $G'$) we have
\begin{eqnarray}\label{anyY}
|M(y)|&\geq &\sum_{xy\in R}[d_T(x) - \sum\{d_T(x,x'):x\neq x'\in N_R(y)\}]\\
&> &(1-2\gz/\gc) F(y),
\nonumber
\end{eqnarray}
for each $y\in Y$,
since for any $x\in X$, $d_T(x)=f(x)\geq \gc np$
and the inner sum in \eqref{anyY} is (w.h.p.) at most
$$d_R(y) \max\{d(x,x'):x,x'\in V\} < \lceil\gz/ p\rceil (1+o(1))np^2
< 2\gz np.$$
So w.h.p. the sum in \eqref{bGforY} has the distribution
${\rm Bin}(m,p)$ for some
\beq{m}
m> (1-2\gz/\gc) \sum_{y\in Y}F(y) \geq (1-2\gz/\gc) |R|\gc np,
\enq
and exceeds $.9 mp$  with probability at least
$1-e^{-.005mp}> 1-e^{-.001|R| np^2}$;
and whenever this happens,
$\bb(G)$ exceeds the r.h.s. of \eqref{apF} by at least
\beq{last}
(.9-2\gz/\gc -\ga')\sum_{y\in Y}F(y)p \geq \vt |R| \gc np^2.
\enq

If $p\geq \gz$, then $R$ is a matching and the inner sum in \eqref{anyY}
is empty.  So
we have $m\geq |R|\gc np$ in \eqref{m},
and in \eqref{last} can replace $.9-2\gz/\gc -\ga'$ by
$.9-\ga'$.
\end{proof}

\section{Remarks}\label{Remarks}
We continue to write $G $ for $G_{n,p}$.

Of course the next goal is to prove Conjecture 1.2.
At this writing we think we may know how to do this,
but the argument envisioned goes well beyond present ideas
and, if correct, will appear separately.

\bn
{\bf Theorem \ref{8.34}?}
It would be interesting to know whether
Theorem \ref{MT} can be proved without Theorem \ref{8.34}.
This is not to say that such a proof would necessarily help
in proving Conjecture \ref{bpsconj},
but, consequences aside, it seems interesting to understand
whether this relatively
difficult ingredient is really needed, or is
just a convenience.
(The extension of Theorem \ref{8.34} to larger $r$,
suggested in \cite{Koh,KLR}, was achieved by Conlon and Gowers in
\cite{Conlon-Gowers} and given a different proof,
building on work of Schacht \cite{Schacht}, by Samotij in \cite{Samotij}.)

It has also seemed interesting to give an easier
proof of Theorem \ref{8.34}.
The original proof \cite{KLR, JLR} uses a
sparse version of Szemer\'edi's Regularity Lemma \cite{Szemeredi}
due to Kohayakawa \cite{Koh} and R\"odl (unpublished; see \cite{Koh}),
together with the triangle case of the
``KLR Conjecture" of \cite{KLR}.
The proofs of \cite{Conlon-Gowers} and
\cite{Samotij} avoid these tools (they do use the
``graph removal lemma"
of \cite{EFR}---so for Theorem \ref{8.34} itself the
``triangle removal lemma" of Ruzsa and Szemer\'edi
\cite{Ruzsa-Szemeredi}---but
not in an essential way \cite{Conlon}),
but
are rather difficult.
Here we just mention that we do now know a reasonably simple proof
of
Theorem \ref{8.34}.
This argument
will appear
in \cite{DHK}.

\bn
{\bf Homology.}
It's not too hard to show that (roughly speaking)
if $p$ is as in Theorem \ref{8.34}, then w.h.p.
every triangle-free $F\sub G$
with $|F|\geq |G|/2$ has even intersection with most triangles of $G$.
(This is essentially due to Frankl and R\"odl \cite{FR86},
following an idea of Goodman \cite{Goodman};
see also \cite[Sec.8.2]{JLR}.)
Thus in thinking about a new proof of Theorem \ref{8.34},
we wondered whether
some insight might be gained by understanding what happens
when one replaces ``most" by ``all."
This turns out not to be a new question:

Recall that the {\em clique complex}, $X(H)$, of a graph $H$ is the
simplicial complex whose faces are the (vertex sets of)
cliques of $H$.  (For background on this and related topological
notions, see for example \cite{Roy,Kahle}.)
A precise conjecture, proposed by M. Kahle (\cite{KahlePC}; see also
\cite{Kahle}) and proved
by him for $\gG={\bf Q}$, is
\begin{conj}\label{CKahle}
Let $\gG$ be either $\ZZ$ or a field.
For each positive integer k and $\eps >0$,
if
$$
p>(1+\eps)\left[(1+k/2)(\log n/n)\right]^{1/(k+1)},
$$
then w.h.p. $H_k(X(G),\gG)=0$ (where $H_k$ denotes $k$th homology group).
\end{conj}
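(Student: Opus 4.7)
The plan is to attack Conjecture \ref{CKahle} by an induction on $k$, combined with a cocycle-minimization argument in the style of Linial--Meshulam--Wallach. This sidesteps Garland's spectral method, which gives Kahle's result for $\gG=\mathbf{Q}$ but is intrinsically characteristic zero and blind to torsion. By universal coefficients it is enough to prove that every $k$-cocycle $\phi\in Z^k(X(G);\gG)$ is a coboundary: choose a representative $\phi^*$ of minimum Hamming support in the class $[\phi]$, and aim to show $\phi^*\equiv 0$.

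The key local fact I would use is that for any $(k-1)$-face $\sigma$ of $X(G)$, the cocycle condition $\delta\phi^*=0$ forces $\phi^*$ to restrict to a $0$-cocycle on the link $\mathrm{lk}(\sigma)=X(G[N_\sigma])$, i.e., to a $\gG$-valued function that is locally constant on $N_\sigma$. At the conjectured threshold $|N_\sigma|$ concentrates around $np^k$, and one checks $p\cdot np^k=np^{k+1}\geq (1+\eps)(1+k/2)\log n$ comfortably exceeds the connectivity threshold of $G(np^k,p)$; so w.h.p.\ $G[N_\sigma]$ is connected and $\phi^*|_{N_\sigma}$ is a single constant $c_\sigma\in\gG$. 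A minimality-based exchange then shifts $\phi^*$ by a carefully chosen $(k-1)$-coboundary to annihilate each $c_\sigma$, forcing $\phi^*=0$. The induction feeds in at levels $j<k-1$, whose $j$-face links are random flag complexes on roughly $np^j$ vertices to which the inductive hypothesis at level $k-1-j$ applies; the base case $k=0$ is just the classical connectivity threshold of $G(n,p)$, which matches the conjectured bound.

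The principal obstacle is the union bound over $\Theta(n^kp^{\binom{k}{2}})$ many $(k-1)$-simplices: the probability that $G[N_\sigma]$ is disconnected is only polynomial in $|N_\sigma|$, not nearly small enough for a naive union bound to give the required ``for all $\sigma$'' statement. Overcoming this will require an encoding/entropy argument that exploits the heavy correlation among overlapping $\sigma$'s, in rough analogy with the auxiliary collections $Q(\Pi)$ used in Section 2 of the present paper, and is where the real work lies. A secondary but genuine obstacle is torsion: for $\gG=\ZZ$ the local rigidity step must deliver an integral coboundary rather than merely a rational one, so the exchange moves must be tracked with denominators in mind, and this is precisely the issue that prevents Kahle's characteristic-zero proof from extending and so will likely demand a new idea even if the union-bound hurdle is overcome.
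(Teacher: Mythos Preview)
There is nothing to compare against: the statement is recorded in the paper as a \emph{conjecture}, not a theorem, and no proof is offered.  The paper attributes the case $\gG=\mathbf{Q}$ to Kahle and claims for itself only the special case $k=1$, $\gG=\ZZ_2$ (Theorem~\ref{topoThm}), with even that proof deferred to the companion manuscript~\cite{DHK}.  So the paper contains no argument for Conjecture~\ref{CKahle} against which your proposal can be measured.

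As for the proposal itself, it is a research plan rather than a proof, and you say as much: you flag the union bound over $(k-1)$-faces and the integrality/torsion issue as unresolved.  Both gaps are real and your sketch does not close them.  The cocycle-minimization-plus-link-connectivity strategy is the natural Linial--Meshulam--Wallach paradigm, but in the clique complex the higher faces are determined by the edges rather than sampled independently, and this dependence is exactly what makes the naive union bound fail.  Your appeal to ``an encoding/entropy argument \ldots\ in rough analogy with the auxiliary collections $Q(\Pi)$'' is a placeholder, not an argument: the $Q(\Pi)$ machinery here is tailored to degree and codegree deviations relative to a single cut and has no evident translation to correlated link-disconnection events across overlapping simplices.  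Until that step (and the torsion step) is actually supplied, what you have is an outline of where the difficulties lie---which, to be fair, matches the status the paper itself assigns to the statement.
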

\nin
(For $k=0$---with, of course,
$H$ replaced by the reduced homology $\tilde{H}$---this is a classical
result of Erd\H{o}s and R\'enyi \cite{ER1} on
connectivity of $G_{n,p}$.)
The answer to the above question
(on edge sets having even intersection with all triangles) is
the case $k=1$, $\gG=\ZZ_2$ of Kahle's conjecture,
and an easy consequence of
the following precise statement (again taken from
\cite{DHK}), in which we take $Q$ to be the event that
every edge of $G$ is in a triangle.

\begin{thm} \label{topoThm}
For any $p=p(n)$
$$\Pr( Q \wedge H_1(X(G),\ZZ_2)\neq 0)\ra 0 ~~(n\ra\infty).$$
\end{thm}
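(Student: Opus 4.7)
The plan is to dualize and work cohomologically. Over $\ZZ_2$ coefficients, $H_1(X(G),\ZZ_2)\cong H^1(X(G),\ZZ_2)$, and a $1$-cocycle on $X(G)$ is just an edge set $F\sub E(G)$ meeting every triangle of $G$ in an even number of edges, while a $1$-coboundary is an edge-cut $\nabla(S,V\sm S)$. So the theorem is equivalent to the assertion that, under $Q$, w.h.p.\ every such cocycle $F$ is a cut of $G$; dually, it is equivalent to the assertion that w.h.p.\ every simple cycle of $G$ is a $\ZZ_2$-sum of triangles of $G$, since if every cycle is a triangle-sum then the standard spanning-tree construction produces a potential $g:V\ra \ZZ_2$ with $F=\delta g$.

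I would split the argument by the size of $p=p(n)$. Outside a narrow scaling window around the common threshold $p^2=\tfrac{3\log n}{2n}$, one of the two events in $Q\wedge (H_1\neq 0)$ is already controlled: when $np^2\leq(\tfrac{3}{2}-\eps)\log n$, a direct first/second-moment calculation shows that w.h.p.\ many edges of $G$ lie in no triangle, so $Q$ itself fails; and when $np^2\geq(\tfrac{3}{2}+\eps)\log n$, every edge lies in $\Theta(np^2)$ triangles and every pair of vertices has $\Omega(np^2)$ common neighbors (Proposition \ref{vdegree}), from which one can iteratively triangulate an arbitrary simple cycle of $G$ via common-neighbor triangles and conclude $H_1=0$ w.h.p.\ outright---essentially the $k=1$, $\gG=\ZZ_2$ case of Conjecture \ref{CKahle}.

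Inside the window is where the main content lives, and here the plan is a union bound over minimal obstructions. For each candidate cycle $C=v_1v_2\cdots v_\ell v_1$ one estimates
$$\Pr[\,C\sub G,\ Q \text{ holds},\ C \text{ is not a $\ZZ_2$-sum of triangles of }G\,].$$
The failure of $C$ to be a triangle-sum forces the absence of enough common-neighbor triangle-completions for the edges of $C$, which is exponentially costly (roughly $\exp(-\Theta(\ell np^2))$) and in principle beats the $\binom{n}{\ell}p^\ell$ count of cycles. The main obstacle, mirroring the situation in Theorem \ref{8.34}, is cycles of the critical length $\ell\sim \log n/(np^2-\tfrac{3}{2}\log n)$, where naive union bounds are barely insufficient: one must use $Q$ quantitatively---as forcing extra triangle-completions elsewhere in $G$ that further constrain the local structure around $C$---rather than merely as a conditioning event. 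This is presumably the technical step that the methods promised in \cite{DHK} are designed to handle.
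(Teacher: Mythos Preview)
The paper does not actually prove Theorem~\ref{topoThm}: it is stated in the Remarks section as a result taken from~\cite{DHK} (in preparation), with only the comment that the authors' original argument relied on Theorem~\ref{8.34} and that they now also have an independent one. So there is no proof in the paper against which to compare your proposal.

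That said, your proposal is not a proof either. The cohomological reformulation is correct and matches the paper's own gloss (``the only subsets of $E(G)$ meeting each triangle an even number of times are the cuts''), and the case split by the size of $np^2$ is natural. But you explicitly concede that the heart of the matter---the union bound over cycles in the critical window---is something you do not know how to complete, deferring it back to~\cite{DHK}. In particular, the assertion that ``the failure of $C$ to be a triangle-sum forces the absence of enough common-neighbor triangle-completions'' is not justified: a cycle can fail to be a $\ZZ_2$-sum of triangles for global reasons having nothing to do with the immediate triangle-neighborhoods of its edges, so the claimed cost $\exp(-\Theta(\ell np^2))$ needs an argument. Your ``easy'' upper range is also less easy than suggested: concluding $H_1(X(G),\ZZ_2)=0$ right at $np^2=(\tfrac{3}{2}+\eps)\log n$ is precisely the $k=1$, $\gG=\ZZ_2$ case of Conjecture~\ref{CKahle}, which the paper records as open (Kahle's proof is for $\gG={\bf Q}$ via spectral methods that do not transfer to $\ZZ_2$). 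In short, what you have written is an outline with the key step missing, which is essentially the same position the paper itself is in.
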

\nin
In other words, w.h.p. either $Q$ fails or the only subsets of
$E(G)$ meeting each triangle an even number of
times are the cuts.
%
As it turned out, our original proof of this was {\em based on}
Theorem \ref{8.34}, so was not all that helpful from the point
of view mentioned above;
but we do now know how to show it without
using Theorem \ref{8.34}, and this was indeed
helpful in suggesting a new proof for the latter.

\bn
Department of Mathematics\\
Rutgers University\\
Piscataway NJ 08854\\
rdemarco@math.rutgers.edu\\
jkahn@math.rutgers.edu

\end{document}